\theoremstyle{thmstyleone}%
\newtheorem{theorem}{Theorem}
\theoremstyle{thmstyleone}%
\newtheorem{lemma}{Lemma}%
\newtheorem{proposition}[theorem]{Proposition}%
\theoremstyle{thmstyletwo}%
\newtheorem{remark}{Remark}%
\theoremstyle{thmstylethree}%
\begin{document}

\title[Article Title]{Rate estimates for weighted total variation norm in terms of Wasserstein distances
}


\author*[1,2]{\fnm{Iván} \sur{Ivkovic}}\email{ivkovic@renyi.hu}

\author*[1,2]{\fnm{Miklós} \sur{Rásonyi}}\email{rasonyi@renyi.hu}

\affil[1]{\orgname{HUN-REN Alfréd Rényi Institute of Mathematics}, \orgaddress{\street{Reáltanoda street 13-15}, \city{Budapest}, \postcode{1053}, \state{Budapest}, \country{Hungary}}}

\affil[2]{\orgdiv{Institute of Mathematics}, \orgname{Faculty of Science, Eötvös Loránd University}, \orgaddress{\street{ Pázmány Péter sétány 1/C}, \city{Budapest}, \postcode{1117}, \state{Budapest}, \country{Hungary}}}


\abstract{We study the weighted total variation distance between probability measures. Using Fourier-analytic tools, we present estimates
in terms of Wasserstein distances between the respective probabilities, under appropriate smoothness and moment conditions. 
We apply our results to convergence of functionals in Malliavin calculus.}

\keywords{Wasserstein metric, weighted total variation distance, Malliavin calculus, smooth densities \newline  
\noindent\textbf{MSC 2020:} 60B10; 60H07; 42B10}

\maketitle
{}

\noindent\textbf{Acknowledgments.} {Supported by the Hungarian National Research, Development and Innovation Office within 
the framework of the Thematic Excellence Program 2021; National Research subprogramme ``Artificial intelligence, 
large networks, data security: mathematical foundation and applications'' and also by the grant K 143529.

Project no. C2290224 has been implemented with the support provided by the Ministry of Culture and Innovation of Hungary from the National Research, Development and Innovation Fund, financed under the KDP-2023 funding scheme.}

\section{Introduction}

Fix an integer $d\geq 1$. Let $\mathcal{P}$ denote the family of probabilities on the Borel sigma-algebra
of $\mathbb{R}^{d}$. For a measurable function  $V:\mathbb{R}^{d}\to \mathbb{R}_{+}$, define the $V$-weighted total variation distance
of $\mu,\nu\in\mathcal{P}$ as
$$
d_{V}(\mu,\nu):=\int_{\mathbb{R}^{d}}V(x)|||\mu-\nu|||(dx),
$$
where $|||\mu-\nu|||$ is the total variation measure of $\mu-\nu$, for more information see e.g. Appendix D in \cite{eric}. 
The notation $|\cdot|$ will stand for the Euclidean norm in $\mathbb{R}^{k}$ or $\mathbb{C}^{k}$ for some $k$, which will always be
clear from the context (in most cases, $k=d$).  
In the present article we are dealing with
$V_{p}(x):=1+|x|^{p}$, $x\in\mathbb{R}^{d}$ for some $p\geq 1$, we denote the corresponding distance by $\rho_{p}(\mu,\nu):=d_{V_{p}}(\mu,\nu)$.

Such distances play a prominent role in Markov process theory and in related probabilistic models.
On uncountable state spaces standard convergence guarantees are often formulated in terms of $d_{V}$, where $V$ is a Lyapunov function
of the given Markov process, see \cite{meyn-tweedie,hm,eric}.

For $p\geq 1$, let us now define also the $p$-Wasserstein distances
$$
W_{p}(\mu,\nu):=\left(\inf_{\pi\in\mathcal{C}(\mu,\nu)}\int_{\mathbb{R}^{d}}\int_{\mathbb{R}^{d}}|x-y|^{p}\pi(dx,dy)\right)^{1/p},\ \mu,\nu\in\mathcal{P}.
$$
Here $\mathcal{C}(\mu,\nu)$ is the set of probabilities on $\mathbb{R}^{d}\times\mathbb{R}^{d}$ such that
their respective marginals are $\mu$ and $\nu$, see \cite{villani}.

Wasserstein distances and optimal transport problems have risen to great popularity in probability theory. 
Without giving a detailed overview we only remark that, satisfying demands from machine learning applications, recent convergence guarantees 
for Markov processes are often formulated in terms of these metrics both in discrete and continuous time, see the studies \cite{zimmer,eberle,majka}. 

It is known that $W_{p}\leq C_{p}\rho^{1/p}_{p}$ with some constant $C_{p}$, see Theorem 6.15 of \cite{villani},
so convergence in $\rho_{p}$ is (much) stronger than in $W_{p}$, in general.
In certain situations, however, convergence in $W_{q}$ for some $q$ may imply convergence in $\rho_{p}$. 
Following the study \cite{rate}, where total variation rates have been deduced from rates in the Fortet-Mourier metric (and similar
metrics), here we seek to estimate the weighted total variation distance between random variables in terms of
Wasserstein distances: in Lemma \ref{fourier1} below we establish that, under smoothness and moment conditions, $\rho_{p}$
convergence rates can be obtained from $W_{q}$ ones. More stringent hypotheses
lead to rates for $\rho_{p}$ that are even better, see Lemma \ref{main2}. Our proof is based on
Fourier-analytic arguments inspired by those in \cite{rate}, but technically more complicated. 

Our interest in these results has been partially driven by a branch of stochastic analysis: Malliavin calulus,
which is a sort of differential calculus for random variables on the Wiener space (in other words, for
functionals of Brownian motion), see \cite{nualart,bally}. It is an efficient tool for studying various
stochastic differential equations, in particular, to establish that the related random variables have (smooth and positive)
densities, light tails, etc. Recent work established that, for certain sequences of Malliavin differentiable functionals,
convergence in the (a priori much weaker) Fortet-Mourier distance implies convergence in the stronger total variation distance,
see \cite{bc1,bally,bc2}. In the present article we continue research into this direction, establishing in Theorem \ref{smoothsequence2}, for the first time, that 
convergence in $q$-Wasserstein distance for some $q>1$ implies (for smooth, non-degenerate functionals of Malliavin calculus) convergence in 
the weighted total variation norms $\rho_{p}$ as well.

After stating rigorously our main results (Lemmas \ref{fourier1} and \ref{main2}) in Section \ref{sec2}, they are proved in
Section \ref{sec3} and applied to Malliavin calculus in Section \ref{sec4}.

\section{Estimates in weighted total variation norm}\label{sec2}

We denote by $\langle\cdot,\cdot\rangle$ 
the usual scalar product in $\mathbb{R}^{d}$. For a function $g:\mathbb{R}^{d}\to\mathbb{C}^{k}$ we
define its supremum norm as
$$
||g||_{\infty}:=\sup_{x\in\mathbb{R}^{d}}|g(x)|,
$$
whatever $k$ is. 
Let $g:\mathbb{R}^{d}\to \mathbb{C}$, then for any $l\geq 1$, $g^{(l)}:\mathbb{R}^{d}\to\mathbb{C}^{d^{l}}$ refers to the $l$th 
derivative of the function $g=g^{(0)}$. $C^{\infty}$ denotes the family of infinitely many times differentiable functions
from $\mathbb{R}^{d}$ to $\mathbb{C}$.

Let $(\Omega,\mathcal{F},P)$ be a probability space, expectation will be denoted by $E\left[\cdot\right]$.{}
For an $\mathbb{R}^{d}$-valued random 
variable $X$, $\mathcal{L}(X)$
denotes its law. 

The characteristic function of an $\mathbb{R}^{d}$-valued random variable $\xi$ is defined as
$$
\phi_{\xi}(u):=E[e^{i\langle u,\xi\rangle}],\ u\in\mathbb{R}^{d},
$$
this is (constant times) the inverse Fourier-transform of the probability measure $\mathcal{L}(\xi)$, see \eqref{maci} below
for the convention we use for Fourier-transform.
Here and in the sequel $i\in\mathbb{C}$ refers to the imaginary unit.

In Corollary 2.2 of \cite{rate} total variation distance was estimated in terms of the Fortet-Mourier
metric, under suitable assumptions on smoothness and the existence of moments. 
The main result of this paper compares weighted total variation distance of two random variables to their
Wasserstein distances, under similar assumptions. 

\begin{lemma}\label{fourier1}
Let $\xi,\eta$ be $d$-dimensional random variables with density functions 
(with respect to the $d$-dimensional Lebesgue measure) denoted by
$f_{\xi},f_{\eta}$. Assume that $f_{\xi},f_{\eta}\in C^{\infty}$ and
\begin{equation}\label{intim}
\Vert f^{(k)}_{\xi}(x) (1+|x|)^{l}\Vert_{\infty}+\Vert f^{(k)}_{\eta}(x) (1+|x|)^{l}\Vert_{\infty}\leq d_{k,l},
\end{equation}
holds for all $k,l\in\mathbb{N}$, where $d_{k,l}$ are a double sequence of (finite) constants.
Then for all $q>1$, $0<\epsilon<1$ and $p\geq 1$ there is a constant $C$
(which depends only on the sequence $d_{k,l}$, $p$, $q$, $d$ and $\epsilon$ but not on $\xi,\eta$)
such that
\begin{equation}\label{weighted}
\rho_{p}(\mathcal{L}(\xi),\mathcal{L}(\eta))\leq C W_{q}(\mathcal{L}(\xi),\mathcal{L}(\eta))^{1-\epsilon}.
\end{equation}
Furthermore, for each $p\geq 1$ and multiindex $\alpha$,{}
\begin{equation}\label{infi}
||(\partial_{\alpha}f_{\xi}(x)-\partial_{\alpha}f_{\eta}(x))(1+|x|^{p})||_{\infty}\leq C' W_{q}(\mathcal{L}(\xi),\mathcal{L}(\eta))^{1-\epsilon}
\end{equation}
holds for another constant $C'$ (which depends only on the sequence $d_{k,l}$, $p$, $q$, $d$, $|\alpha|$ and $\epsilon$ but not on $\xi,\eta$). 
\end{lemma}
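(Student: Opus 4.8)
The plan is to establish the pointwise bound \eqref{infi} first and then deduce the integrated bound \eqref{weighted} from it. For the latter reduction, observe that since both laws are absolutely continuous, $\rho_{p}(\mathcal{L}(\xi),\mathcal{L}(\eta))=\int_{\mathbb{R}^{d}}(1+|x|^{p})|f_{\xi}(x)-f_{\eta}(x)|\,dx$, and writing $1+|x|^{p}=\frac{1+|x|^{p}}{1+|x|^{p+d+1}}(1+|x|^{p+d+1})$ one can pull out the sup norm $\|(f_{\xi}-f_{\eta})(1+|x|^{p+d+1})\|_{\infty}$ and is left with the finite integral $\int_{\mathbb{R}^{d}}(1+|x|^{p})/(1+|x|^{p+d+1})\,dx$. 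Hence \eqref{weighted} follows from \eqref{infi} applied with $\alpha=0$ and $p$ replaced by $p+d+1$, and the whole task reduces to \eqref{infi}.

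For \eqref{infi}, I would first replace the weight $1+|x|^{p}$ by monomials: since $1+|x|^{p}\leq C_{p}(1+\sum_{|\beta|\leq m_{p}}|x^{\beta}|)$ for $m_{p}$ the least even integer $\geq p$, it suffices to bound $|x^{\beta}\partial_{\alpha}(f_{\xi}-f_{\eta})(x)|$ uniformly in $x$ for each $|\beta|\leq m_{p}$. Fourier inversion expresses $\partial_{\alpha}(f_{\xi}-f_{\eta})$ as a constant times $\int_{\mathbb{R}^{d}} e^{-i\langle u,x\rangle}u^{\alpha}(\phi_{\xi}(u)-\phi_{\eta}(u))\,du$ (up to powers of $i$), and multiplication by $x^{\beta}$ turns into differentiation in $u$; integrating by parts in $u$ and expanding by the Leibniz rule, the problem collapses to bounding, for finitely many $m\leq|\alpha|$ and multiindices $\gamma$ with $|\gamma|\leq m_{p}$, the integrals $\int_{\mathbb{R}^{d}}|u|^{m}|\partial^{\gamma}_{u}(\phi_{\xi}-\phi_{\eta})(u)|\,du$.

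Two estimates drive the argument. At \emph{low frequencies} I would use a coupling $\pi\in\mathcal{C}(\mathcal{L}(\xi),\mathcal{L}(\eta))$: since $\partial^{\gamma}_{u}\phi_{\xi}(u)=E[(i\xi)^{\gamma}e^{i\langle u,\xi\rangle}]$, the difference $\partial^{\gamma}_{u}(\phi_{\xi}-\phi_{\eta})(u)$ is the $\pi$-integral of $g(x)-g(y)$ with $g(z)=(iz)^{\gamma}e^{i\langle u,z\rangle}$; the elementary bound $|g(x)-g(y)|\leq C_{\gamma}(1+|u|)(1+|x|+|y|)^{|\gamma|}|x-y|$ along the segment $[x,y]$, followed by Hölder's inequality with exponents $q,q'$, yields $|\partial^{\gamma}_{u}(\phi_{\xi}-\phi_{\eta})(u)|\leq C(1+|u|)W_{q}(\mathcal{L}(\xi),\mathcal{L}(\eta))$, where $C$ depends on $d_{k,l},q,\gamma$ through the moments $E|\xi|^{|\gamma|q'},E|\eta|^{|\gamma|q'}$ that \eqref{intim} keeps finite and uniformly bounded. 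At \emph{high frequencies} I would use that $\partial^{\gamma}_{u}\phi_{\xi}$ is the Fourier transform of $(ix)^{\gamma}f_{\xi}(x)$, so repeated integration by parts gives $(1+|u|)^{N}|\partial^{\gamma}_{u}\phi_{\xi}(u)|\leq C_{N,\gamma}$ for every $N$, with $C_{N,\gamma}$ controlled by \eqref{intim}; the same holds for $\eta$.

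Finally, for $R\geq 1$ I would split $\int|u|^{m}|\partial^{\gamma}_{u}(\phi_{\xi}-\phi_{\eta})|\,du$ at $|u|=R$: the low-frequency part is $\leq C R^{m+1+d}W_{q}(\mathcal{L}(\xi),\mathcal{L}(\eta))$ and the high-frequency part is $\leq C_{N}R^{m+d-N}$, which for $N$ large is $\leq R^{-B}$ with $B$ as large as we please. Choosing $R=W_{q}(\mathcal{L}(\xi),\mathcal{L}(\eta))^{-\theta}$ with $\theta$ small enough that $1-\theta(m+1+d)\geq 1-\epsilon$ and then $N$ large enough that $\theta B\geq 1-\epsilon$ balances the two contributions and produces the exponent $1-\epsilon$; the case $W_{q}\geq 1$ (where $R<1$) is disposed of by the crude a priori bound on $\rho_{p}$ and on the sup norms coming directly from \eqref{intim}, while $W_{q}=0$ forces equality of the laws. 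I expect the main obstacle to be the low-frequency step: because multiplication by $(i\xi)^{\gamma}$ destroys the boundedness that underlies the classical estimate $|\phi_{\xi}-\phi_{\eta}|\leq|u|\,W_{1}$, one must replace it by the Lipschitz-along-segments plus Hölder argument above and verify that all resulting moment factors are bounded by the constants $d_{k,l}$ uniformly in $\xi,\eta$.
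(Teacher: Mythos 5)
Your proof is correct, and its Fourier-analytic core coincides with the paper's: both arguments rest on (i) the low-frequency bound $|\partial^{\gamma}(\phi_{\xi}-\phi_{\eta})(u)|\leq C(1+|u|)W_{q}$, obtained from an optimal coupling, a Lipschitz-type bound on $z\mapsto z^{\gamma}e^{i\langle u,z\rangle}$ and H\"older with exponents $q,q'$ (this is exactly the paper's estimate \eqref{fabol}); (ii) the rapid decay $(1+|u|)^{N}|\partial^{\gamma}\phi(u)|\leq C_{N}$ inherited from \eqref{intim} by Fourier duality (the paper's Proposition \ref{equiv1}); and (iii) optimization of a splitting radius of the form $W_{q}^{-\theta}$, with the same treatment of the edge cases $W_{q}\geq 1$ and $W_{q}=0$. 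Where you genuinely diverge is in how \eqref{weighted} is deduced. You prove the pointwise bound \eqref{infi} first and then obtain \eqref{weighted} as a corollary, applying \eqref{infi} with weight exponent $p+d+1$ and writing
\begin{equation*}
\int_{\mathbb{R}^{d}}(1+|x|^{p})|f_{\xi}-f_{\eta}|\,dx\leq \Vert (f_{\xi}-f_{\eta})(1+|x|^{p+d+1})\Vert_{\infty}\int_{\mathbb{R}^{d}}\frac{1+|x|^{p}}{1+|x|^{p+d+1}}\,dx,
\end{equation*}
the last integral being finite since the integrand decays like $|x|^{-(d+1)}$. The paper instead runs a second, independent optimization in $x$-space: it splits $\int|f_{\xi}-f_{\eta}|\,|x|^{p}dx$ at a radius $M$, controls the tail region $\{|x|>M\}$ by Cauchy--Schwarz and Markov inequalities via moments, controls the bulk by the sup-norm estimate times the volume $M^{d+p}$, and then chooses $M$ as a second negative power of $A=W_{q}$, yielding the explicit exponent $\theta_{l,p}=\frac{(l-d)l}{(l+1)(l+p+d)}$. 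Your reduction avoids this second optimization entirely and is cleaner; the paper's route produces more explicit exponents and constants, but both deliver the same rate $W_{q}^{1-\epsilon}$. A further, purely organizational difference: the paper assumes $p$ even and works with the operator $\Delta_{p}=\sum_{j}\partial_{j}^{p}$ acting on the characteristic functions, whereas you use monomial weights $x^{\beta}$ together with integration by parts and the Leibniz rule in the frequency variable; these are equivalent devices, and in both cases the justification (vanishing boundary terms, differentiation under the integral) comes from the same rapid-decay estimates.
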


Even stronger hypotheses lead to stronger conclusions.

\begin{lemma}\label{main2} 
Let $\xi,\eta$ be $d$-dimensional random variables with 
$$E[e^{r|\xi|}+e^{r|\eta|}]\leq C^{\sharp}<\infty,{}
$$
for some $r>0$. Let their characteristic functions be
$\phi_{\xi},\phi_{\eta}$. 
Assume that $\phi_{\xi},\phi_{\eta}\in C^{\infty}$ and
\begin{equation}\label{intim2}
\int_{\mathbb{R}^{d}}|\phi^{(k)}_{\xi}(x)| e^{r_{k}|x|}\, dx+ \int_{\mathbb{R}^{d}}
|\phi^{(k)}_{\eta}(x)| e^{r_{k}|x|}\, dx\leq c_{k},
\end{equation}
holds for all $k$, where $c_{k}, r_{k}>0$ are sequences of (finite) constants.
Then for all $q>1$, and $p\geq 1$ there is a constant $C$
(which depends only on the sequences $c_{k}$, $r_{k}$, on $p$, $q$, $d$, $r$, $C^{\sharp}$ and $\epsilon$ but not on $\xi,\eta$)
such that
\begin{equation}\label{weighted2}
\rho_{p}(\mathcal{L}(\xi),\mathcal{L}(\eta))\leq CW_{q}(\mathcal{L}(\xi),\mathcal{L}(\eta))|\ln(W_{q}(\mathcal{L}(\xi),\mathcal{L}(\eta))|^{2d+1}.
\end{equation}
\end{lemma}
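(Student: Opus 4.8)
The plan is to move everything to the Fourier side, turning the polynomial weight $1+|x|^p$ into differentiation of the characteristic functions, and to control the resulting frequency integrals by combining the exponential integrability \eqref{intim2} at high frequencies with a coupling-based smallness estimate at low frequencies. I would run two nested cutoffs — one in space (radius $S$), one in frequency (radius $R$) — and optimize both; it is precisely the product of the two dimension-dependent volume factors that will generate the power $|\ln W_q|^{2d+1}$ in \eqref{weighted2}.

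First I would record the elementary reductions. Taking $k=0$ in \eqref{intim2} forces $\phi_\xi,\phi_\eta\in L^1(\mathbb{R}^d)$, so both laws have bounded continuous densities $f_\xi,f_\eta$ recovered by Fourier inversion, and $\rho_p(\mathcal{L}(\xi),\mathcal{L}(\eta))=\int_{\mathbb{R}^d}(1+|x|^p)|f_\xi(x)-f_\eta(x)|\,dx$. For a spatial radius $S>0$ I split this integral at $|x|=S$. On $\{|x|>S\}$ the exponential moment hypothesis controls the tail: choosing $0<\theta<r$ and using $(1+|x|^p)e^{\theta|x|}\le Ce^{r|x|}$,
\[
\int_{|x|>S}(1+|x|^p)(f_\xi+f_\eta)(x)\,dx\le e^{-\theta S}\int_{\mathbb{R}^d}(1+|x|^p)e^{\theta|x|}(f_\xi+f_\eta)\,dx\le C_p\,e^{-\theta S},
\]
where the last constant is finite and $\xi,\eta$-uniform thanks to $E[e^{r|\xi|}+e^{r|\eta|}]\le C^{\sharp}$. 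On $\{|x|\le S\}$ I bound the integral by $C_dS^{d}\,\|(1+|x|^p)(f_\xi-f_\eta)\|_\infty$, using only the volume $S^d$ of the ball (the weight $1+|x|^p$ is absorbed into the supremum, which is what keeps $p$ out of the final exponent).

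The crux is then the weighted supremum. Since $1+|x|^{p}\le c_{d,m}\sum_{|\beta|\le m}|x^\beta|$ for any integer $m\ge p$, and since Fourier inversion followed by $|\beta|$ integrations by parts (the boundary terms vanishing by \eqref{intim2}) gives $\|x^\beta(f_\xi-f_\eta)\|_\infty\le(2\pi)^{-d}\int_{\mathbb{R}^d}|\partial^\beta(\phi_\xi-\phi_\eta)(u)|\,du$, it suffices to bound these $L^1$ norms of frequency-side derivatives. For each $\beta$ I split the integral at $|u|=R$. On $\{|u|>R\}$, writing $k=|\beta|$ and noting $|\partial^\beta\phi_\xi(u)|\le|\phi^{(k)}_\xi(u)|$, \eqref{intim2} yields $\int_{|u|>R}|\partial^\beta\phi_\xi|\,du\le e^{-r_kR}\int|\phi^{(k)}_\xi|e^{r_k|u|}\,du\le c_k e^{-r_kR}$, an exponentially small tail. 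On $\{|u|\le R\}$ I use the key pointwise estimate obtained from an optimal coupling $\pi\in\mathcal{C}(\mathcal{L}(\xi),\mathcal{L}(\eta))$ for $W_q$: with $h_u(x)=x^\beta e^{i\langle u,x\rangle}$ and $\partial^\beta\phi_\xi(u)=i^{|\beta|}E[\xi^\beta e^{i\langle u,\xi\rangle}]$,
\[
|\partial^\beta(\phi_\xi-\phi_\eta)(u)|\le\int |h_u(x)-h_u(y)|\,\pi(dx,dy)\le C_\beta(1+|u|)\int|x-y|(1+|x|+|y|)^{|\beta|}\,\pi(dx,dy),
\]
by the mean value inequality, and Hölder with exponents $q,q'=q/(q-1)$ turns this into $|\partial^\beta(\phi_\xi-\phi_\eta)(u)|\le C_\beta(1+|u|)\,W_q(\mathcal{L}(\xi),\mathcal{L}(\eta))$, the Hölder factor $\big(\int(1+|x|+|y|)^{|\beta|q'}\pi\big)^{1/q'}$ being finite and $\xi,\eta$-uniform because the exponential moments bound all polynomial moments uniformly. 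Integrating this over $\{|u|\le R\}$ costs the frequency volume $\int_{|u|\le R}(1+|u|)\,du\le CR^{d+1}$.

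Finally I would combine and optimize. The previous step gives $\|(1+|x|^p)(f_\xi-f_\eta)\|_\infty\le C\big(W_qR^{d+1}+e^{-\bar r R}\big)$ with $\bar r=\min_{k\le m}r_k$; choosing $R$ of order $|\ln W_q|/\bar r$ balances the two terms and yields $\|(1+|x|^p)(f_\xi-f_\eta)\|_\infty\le C\,W_q|\ln W_q|^{d+1}$. Feeding this back,
\[
\rho_p(\mathcal{L}(\xi),\mathcal{L}(\eta))\le C\,S^{d}\,W_q|\ln W_q|^{d+1}+C_p\,e^{-\theta S},
\]
and optimizing the spatial cutoff $S\sim|\ln W_q|/\theta$ balances these as well, contributing the extra factor $S^{d}\sim|\ln W_q|^{d}$ and producing the claimed bound $\rho_p\le C\,W_q|\ln W_q|^{2d+1}$. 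I expect the main obstacle to be the uniform pointwise frequency estimate: establishing $|\partial^\beta(\phi_\xi-\phi_\eta)(u)|\le C_\beta(1+|u|)W_q$ with a constant that is genuinely independent of $\xi,\eta$ is where the coupling, the mean value inequality, and — crucially — the uniform polynomial moments furnished by the exponential moment hypothesis all have to be orchestrated together; the secondary difficulty is the bookkeeping of the two optimizations, whose $d$-dependent volume factors combine to give exactly the exponent $2d+1$.
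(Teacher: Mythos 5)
Your proposal is correct and follows essentially the same route as the paper's proof: the same coupling/H\"older estimate $|\partial^{\beta}(\phi_{\xi}-\phi_{\eta})(u)|\leq C(1+|u|)W_{q}$ at low frequencies, the exponential-weight hypothesis \eqref{intim2} at high frequencies, and the exponential moment bound for the spatial tail, with the frequency and spatial cutoffs both optimized at scale $|\ln W_{q}|$ so that the volume factors contribute $d+1$ and $d$ logarithms, summing to $2d+1$. The only cosmetic differences are that you extract the factor $e^{-r_{k}R}$ directly from the weighted $L^{1}$ bound where the paper uses a Cauchy--Schwarz splitting, and you work with general multiindices $\beta$ instead of the operator $\Delta_{p}=\sum_{j}\partial_{j}^{p}$; neither changes the substance.
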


\begin{remark}{\rm We do not know whether and how the previous statements could be proved in the case $q=1$.}
\end{remark}

\begin{remark}{\rm Proposition \ref{equiv1} below shows that, instead of \eqref{intim}, in Lemma \ref{fourier1} we may require
\begin{equation}\label{smo}
\Vert |\phi^{k}(x)|(1+|x|)^{l}\Vert_{\infty}\leq b_{k,l},\ k,l\in\mathbb{N}	
\end{equation}
for $\phi:=\phi_{\xi}$ and $\phi=\phi_{\eta}$ with a double sequence of positive
constants $b_{k,l}$. 

Sometimes working with characteristic functions is easier. We sketch a very simple example when $d=1$. Let $\xi_{n}$ be a
sequence with $\sup_{n}E|\xi_{n}|^{k}<\infty$ for each $k\in\mathbb{N}$, which implies that $\phi_{\xi_{n}}$ is
$C^{\infty}$ for all $n$ with $|\phi_{\xi_{n}}^{(k)}(u)|$ uniformly bounded in $n$, for each $k$. Assume that $h_{n}=E^{1/q}|\xi_{n}-\xi|^{q}\to 0$
for some $q>1$. Consider the ``smoothed out'' random variables $\eta_{n}:=\xi_{n}+\vartheta$ where $\vartheta${}
is independent of $\sigma(\xi,\xi_{n},n\in\mathbb{N})$ and $\phi_{\vartheta}$ satisfies \eqref{smo}. We claim that,
for every $\epsilon>0$, $\rho_{p}(\mathcal{L}(\eta_{n}),\mathcal{L}(\xi+\vartheta))=O(h_{n}^{1-\epsilon})$.
Indeed, we only need to check that $\phi_{\eta_{n}}$ satisfy \eqref{smo} uniformly in $n$ which is easy: for any multiindex,
$\partial_{\alpha}(\phi_{\eta_{n}})=\partial_{\alpha}(\phi_{\xi_{n}}\phi_{\vartheta})$ is a sum of products of terms of
the form either $\phi_{\xi_{n}}^{(k)}$ or $\phi_{\vartheta}^{(l)}$ for some $k,l$ and at least one term is $\phi_{\vartheta}^{(l)}$. 
Since $\phi_{\vartheta}^{(l)}(u)$ decreases faster than any polynomial as $|u|\to\infty$, so does 
$\partial_{\alpha}(\phi_{\xi_{n}}\phi_{\vartheta})(u)$, uniformly in $n$. Lemma \ref{fourier1} now applies.

Smoothing the sequence via convolution guarantees rapid decay and uniform smoothness of the 
characteristic functions in the frequency domain, thereby enabling convergence rate estimates in 
strong distances under only $L^q$-convergence of the original sequence.
}	
\end{remark}

\section{Proof of main results}\label{sec3}

For each (Lebesgue-) integrable $f:\mathbb{R}^{d}\to\mathbb{C}$, denote by $\hat{f}(u)$ its Fourier transform, given by
\begin{equation}\label{maci}
\hat{f}(u) = \int_{\mathbb{R}^{d}} f(x) e^{-i\langle u, x \rangle} \, dx,\quad u\in\mathbb{R}^{d}.
\end{equation}
Note that $\hat{f}^{(k)}$ is the $k$th derivative of the Fourier-transform of $f$ while
$\widehat{f^{(k)}}$ is the Fourier-transform of the $k$th derivative of $f$.

For a multiindex $\alpha=(\alpha_{1},\ldots,\alpha_{d})\in\mathbb{N}^{d}$, $\partial_{\alpha}$ refers to the respective
partial derivative, the \emph{multiplicity} of the multiindex is $|\alpha|:=\sum_{j=1}^{d}\alpha_{j}$.

It is well-known that the space of rapidly decreasing smooth functions is closed under the Fourier transform.
The following result formulates this fact in a slightly more precise way, which will be necessary for our later arguments.

\begin{proposition}\label{equiv1}
Let $\mathbf{F}$ be a set of functions $f:\mathbb{R}^{d}\to\mathbb{C}$.
The following two statements are equivalent:

\begin{enumerate}

\item $\mathbf{F}\subset C^{\infty}$, there exists a double sequence of positive
constants $b_{k,l}$ such that for all $k,l\in\mathbb{N}$, 
\begin{equation}\label{el1}
\sup_{f\in\mathbf{F}}\Vert f^{(k)}(x)(1+|x|)^{l}\, \Vert\leq b_{k,l}.	
\end{equation}

\item All $f\in\mathbf{F}$ are Lebesgue-integrable with $\hat{f}\in C^{\infty}$; there exists a double sequence of positive
constants $d_{k,l}$ such that for all $k,l\in\mathbb{N}$, 
$$
\sup_{f\in\mathbf{F}}||\hat{f}^{(k)}(u) (1+|u|)^{l}||_{\infty}\leq d_{k,l}.	
$$
\end{enumerate} 
\end{proposition}
\begin{proof}
We are showing $1.\to 2.$

Theorem 1.7 of \cite{fourier} and \eqref{el1} imply that $\hat{f}\in C^{\infty}$ and, for all multiindices $\alpha$,{}
\begin{equation*}
\partial_{\alpha}\hat{f}(u) = (-i)^{|\alpha|} \int_{\mathbb{R}^{d}} x_{1}^{\alpha_{1}}\cdots x_{d}^{\alpha_{d}} f(x) e^{-i\langle u, x \rangle} \, dx.
\end{equation*}

Next, fix $m\in \{1,\ldots,d\}$ and $l\in\mathbb{N}$. The notation $\partial_{m}^{l}$ refers to taking $l$ times the partial
derivative with respect to the $m$th variable. We will establish by induction that $\partial_{\alpha}\hat{f}(u) u_{m}^{l} i^{l}$ is the 
Fourier transform of $(-i)^{|\alpha|}\partial^{l}_{m}[f(x)x_{1}^{\alpha_{1}}\cdots x_{d}^{\alpha_{d}}]$. The case $l=0$ being already done
we proceed to the induction step. 
Let $e\in\mathbb{R}^{d}$ be such that $e_{m}=1$ and $e_{j}=0$ for $j\neq m$, define
\[
g_{e}(h,x) := \frac{(-i)^{|\alpha|} \partial_{m}^{l} [f(z)z_{1}^{\alpha_{1}}\cdots z_{d}^{\alpha_{d}}]\vert_{z=x+he}-
(-i)^{|\alpha|} \partial_{m}^{l}[f(x)x_{1}^{\alpha_{1}}\cdots x_{d}^{\alpha_{d}}]}{h}.
\]
As $f\in C^{\infty}$, $g_{e}(h,x)\to (-i)^{|\alpha|}\partial_{m}^{l+1}[f(x)x_{1}^{\alpha_{1}}\cdots x_{d}^{\alpha_{d}}]$
pointwise as $h\to 0$. We need to show that this convergence takes place in the $L^{1}$-norm, that is,
\begin{equation}\label{hello}
\int_{\mathbb{R}^{d}}\left| g_{e}(h,x)-
(-i)^{|\alpha|}\partial_{m}^{l+1}[f(x)x_{1}^{\alpha_{1}}\cdots x_{d}^{\alpha_{d}}] \right|\, dx\to 0
\end{equation}
as $h\to 0$. Once this has been shown, we may invoke Theorem 1.8 of \cite{fourier} to conclude that 
$\partial_{\alpha}\hat{f}(u) u_{m}^{l+1}i^{l+1}$ is indeed the 
Fourier transform of $(-i)^{|\alpha|}\partial^{l+1}_{m}[f(x)x_{1}^{\alpha_{1}}\cdots x_{d}^{\alpha_{d}}]$.
By dominated convergence, it suffices to present an integrable majorant of the integrand in \eqref{hello}. 

For $|h|<1$, by the Lagrange mean value theorem, there is $|\chi(x,h)|<1$ such that
$$
g_{e}(h,x)=
(-i)^{|\alpha|}\partial_{m}^{l+1}[f(z)z_{1}^{\alpha_{1}}\cdots z_{d}^{\alpha_{d}}]\vert_{z=x+\chi(x,h)e}.
$$
Notice that $\partial_{m}^{l+1}[f(z)z_{1}^{\alpha_{1}}\cdots z_{d}^{\alpha_{d}}]$ is a sum of polynomial functions
multiplied by derivatives of $f$ hence, by \eqref{el1}, there is a constant $C^{\flat}$ such that the integrand
in \eqref{hello} is smaller than
$$
\frac{C^{\flat}}{1+|x+\chi(x,h)e|^{d+1}}\leq \frac{C^{\natural}}{(1+|x|)^{d+1}}
$$
for all $x$ with some constant $C^{\natural}$, implying \eqref{hello}.

Define
\[
\check{C}_{k,l}:=\sup_{f\in\mathbf{F}}\max_{|\alpha|\leq k}\left\{\max_{1\leq m\leq d} \int_{\mathbb{R}^{d}} |\partial_{{m}}^{l}[f(x)
x_{1}^{\alpha_{1}}\cdots x_{d}^{\alpha_{d}}]|\, dx\right\},
\]
this is finite by \eqref{el1}.
For $|u|<1$, we use the bound
\begin{equation}\label{bound_small}
|\partial_{\alpha}\hat{f}(u)|\leq \check{C}_{|\alpha|,0}\leq \check{C}_{|\alpha|,l}.
\end{equation} 
For $|u|\geq 1$,
\[
|u_{m}|^{l}|\partial_{\alpha}\hat{f}(u)|\leq \check{C}_{|\alpha|,l}
\]
since, as already shown, $\partial_{\alpha}\hat{f}(u) u_{m}^{l}i^{l}$ is  the 
Fourier transform of $(-i)^{|\alpha|}\partial^{l}_{m}[f(x)x_{1}^{\alpha_{1}}\cdots x_{d}^{\alpha_{d}}]$. 
For each $u$, there always exists $m^{*}=m^{*}(u)$ such that $|u_{m^{*}}|\geq |u|/\sqrt{d}$. Therefore,
\[
|\partial_{\alpha}\hat{f}(u)|\leq \sqrt{d}\check{C}_{|\alpha|,l}/|u|^{l}\leq 
\frac{\sqrt{d}\check{C}_{|\alpha|,l}2^{l}}{(1+|u|)^{l}},
\quad |u|\geq 1.
\]
From \eqref{bound_small}, we also obtain
\[
|\partial_{\alpha}\hat{f}(u)|\leq \frac{\sqrt{d}\check{C}_{|\alpha|,l}2^{l}}{(1+|u|)^{l}},
\quad |u|< 1.
\]
This completes the proof of the direction $1. \xrightarrow{} 2.$ by setting $d_{k,l}:=\check{C}_{k,l}\sqrt{d}2^{l}$.

The implication \( 2. \to 1. \) follows by applying the argument used in the proof of \( 1. \to 2. \) to the inverse Fourier transform. 
\end{proof}

\begin{proof}[Proof of Lemma \ref{fourier1}]
By Theorem 4.1 of \cite{villani} there is a coupling $(\xi',\eta')${}
with $\mathcal{L}(\xi)=\mathcal{L}(\xi')$ and $\mathcal{L}(\eta)=\mathcal{L}(\eta')${}
such that $W_{q}(\xi,\eta)=E^{1/q}[|\xi'-\eta'|^{q}]$. From this moment
on we assume $\xi:=\xi'$, $\eta=\eta'$. 
To simplify notation, we write $A:=E^{1/q}[|\xi-\eta|^{q}]$. The case $A=0$ is trivial, we
assume $0<A\leq 1$ for the moment. 

Denote by $\phi_{\xi}(u):=E[e^{i\langle u,\xi\rangle}]$, $u\in\mathbb{R}^{d}$ 
and $\phi_{\eta}$ the respective characteristic 
functions of $\xi,\eta$. 

From \eqref{intim} and from the implication $2.\to 1.$ in Proposition \ref{equiv1} we know 
that for each $k,l\in\mathbb{N}$ there is $d_{k,l}$ such that for each multiindex $\alpha$,
\begin{equation}\label{kiralyfi}
|\partial_{\alpha}\phi_{\xi}(u)|+|\partial_{\alpha}\phi_{\eta}(u)|\leq{}
\frac{d_{|\alpha|,l}}{(1+|u|)^{l}},\ u\in\mathbb{R}^{d}.
\end{equation}

Assume first $|\alpha|\geq 1$. By applying the product difference bound,
\begin{eqnarray*}
& &\left| \xi_1^{\alpha_1} \dots \xi_d^{\alpha_d} - \eta_1^{\alpha_1} \dots \eta_d^{\alpha_d} \right| \\
& \leq & |\alpha| \max_j |\xi_j - \eta_j|
\left[ \max_j |\xi_j| + \max_j |\eta_j| \right]^{|\alpha|-1} \\
& \leq &
 |\alpha| |\xi - \eta| 2^{|\alpha|-1} (|\xi|^{|\alpha|-1} + |\eta|^{|\alpha|-1}) \\
\end{eqnarray*}
Next we use the elementary inequality $|e^{ix}-e^{iy}|\leq |x-y|$, $x,y\in\mathbb{R}$ for obtaining the following bound
\begin{eqnarray*}
    & & |e^{i\langle u,\xi\rangle}\xi_{1}^{\alpha_{1}}\ldots \xi_{d}^{\alpha_{d}}-
e^{i\langle u,\eta\rangle}\eta_{1}^{\alpha_{1}}\ldots \eta_{d}^{\alpha_{d}}|\\
&\leq& 	|(e^{i\langle u,\xi\rangle}-e^{i\langle u,\eta\rangle})\xi_{1}^{\alpha_{1}}\ldots \xi_{d}^{\alpha_{d}}|
+|\xi_{1}^{\alpha_{1}}\ldots \xi_{d}^{\alpha_{d}}-\eta_{1}^{\alpha_{1}}\ldots\eta_{d}^{\alpha_{d}}|\\
&\leq& |\xi|^{|\alpha|}|\langle u,\xi-\eta\rangle|+|\alpha| 2^{|\alpha|-1} (|\xi|^{|\alpha|-1} + |\eta|^{|\alpha|-1}) |\xi - \eta|.
\end{eqnarray*}
The inequalities \eqref{intim} guarantee that all the moments of $\xi$ are finite hence one can ``differentiate under the expectation'' in
the definition of $\phi_{\xi}$ and arrive at $$
\partial_{\alpha}\phi_{\xi}(u)=i^{|\alpha|}E[\xi_{1}^{\alpha_{1}}\ldots \xi_{d}^{\alpha_{d}}e^{i\langle \xi,u\rangle}],$$
analogously for $\phi_{\eta}$. We may thus estimate with $q'$ satisfying $1/q+1/q'=1$,
\begin{eqnarray}\label{fabol}
& & |\partial_{\alpha}\phi_{\xi}(u)-\partial_{\alpha}\phi_{\eta}(u)|\\
\nonumber &\leq& E^{1/q'}[|\xi|^{|\alpha|q'}]  |u| E^{1/q}[|\xi - \eta|^{q}]
	+	|\alpha| 2^{|\alpha|-1}  E^{1/q'}[(|\xi|^{|\alpha|-1} + |\eta|^{|\alpha|-1})^{q'}]E^{1/q}[|\xi - \eta|^{q}] \\
\nonumber   &\leq& C^{\circ}_{|\alpha|} A[|u|+1],
\end{eqnarray}
with constants $C^{\circ}_{k}$, $k\geq 1$, since by \eqref{intim}, all moments of $\xi,\eta$ are finite. In the case $|\alpha|=0$ we get
$$
|\phi_{\xi}(u)-\phi_{\eta}(u)|\leq |u|E|\xi-\eta|\leq |u| A\leq A(|u|+1)
$$
and set $C^{\circ}_{0}:=1$. 

Note that the area of the $d$-dimensional unit sphere is $2\pi^{d/2}/\Gamma(d/2)$.{}
Before proceeding, we estimate, for $k>d$,
\begin{eqnarray}\nonumber
& & \int_{|u|\geq M}(1+|u|)^{-k}\, du\leq \int_{|u|\geq M}|u|^{-k}\, du\\
&=& \int_{r\geq M}\frac{2\pi^{d/2}}{\Gamma(d/2)}r^{d-1}r^{-k}\, dr= 
\frac{\gamma_{k}}{M^{k-d}}	
\label{explin}
\end{eqnarray}
holds for all $M>0$ with constant $\gamma_{k}:=\frac{2\pi^{d/2}}{\Gamma(d/2)(k-d)}$.

We may and will assume from now on that $p\geq 2$ is an even integer. 
Introduce the differential operators $\Delta_{p}:=\sum_{j=1}^{d}\partial_{{j}}^{p}$.
We know from the proof of Proposition \ref{equiv1} that
$$
f_{\xi}(x)\sum_{j=1}^{d} |x_{j}|^{p}=f_{\xi}(x)\sum_{j=1}^{d} x_{j}^{p}=
\frac{(-i)^{p}}{(2\pi)^{d}}\int_{\mathbb{R}^{d}}\Delta_{p}\phi_{\xi}(u)e^{-i\langle u,x\rangle}\, du,
$$ 
and similarly for $f_{\eta}$. 

Note that $\Delta_{p}$ is the sum of $d$ differential operators with multiindices of multiplicity $p$.
Recall that the ball of radius $M$ in $\mathbb{R}^{d}$ has volume $M^{d}\pi^{d/2}/\Gamma\left(\frac{d}{2} + 1\right)$.
Hence for each $x\in\mathbb{R}$ and for each $l>d$
\begin{eqnarray}\label{majkell}
& & |f_{\xi}(x)-f_{\eta}(x)| \left[\sum_{j=1}^{d}|x_{j}|^{p}\right] 
\\ &\leq&\nonumber \frac{1}{(2\pi)^{d}}\int_{|u|\leq M}|\Delta_{p}\phi_{\xi}(u)-\Delta_{p}\phi_{\eta}(u)|\, du
+\frac{1}{(2\pi)^{d}}\int_{|u|>M} (|\Delta_{p}\phi_{\xi}(u)|+|\Delta_{p}\phi_{\eta}(u)|)\, du\\  &\leq& \nonumber
\frac{C^{\circ}_p A d \pi^{d/2}}{(2\pi)^d \Gamma\left(\frac{d}{2} + 1\right)} \left[ { M^{d+1}} +  M^d \right]
 + \frac{1}{(2\pi)^{d}}\int_{|u|>M} (|\Delta_{p}\phi_{\xi}(u)|+|\Delta_{p}\phi_{\eta}(u)|)\, du\\  &\leq& \nonumber
 \frac{C^{\circ}_p A d \pi^{d/2}}{(2\pi)^d \Gamma\left(\frac{d}{2} + 1\right)} \left[ M^{d+1} + M^d \right] + 
\frac{2d d_{p,l} \gamma_l}{(2\pi)^d M^{l - d}}.
\end{eqnarray}
by \eqref{fabol}, \eqref{kiralyfi} and \eqref{explin}, respectively. Noting that $|x|^{p}\leq h_{p}\sum_{j=1}^{d}|x_{j}|^{p}$
for a suitable constant $h_{p}$ the following bound can be obtained in the case $M\geq 1$:
\begin{eqnarray}\nonumber
& & |f_{\xi}(x)-f_{\eta}(x)| |x|^{p}  \\ &\leq& \nonumber
\frac{h_pd}{(2\pi)^d} \left[
\frac{C^{\circ}_p A  \pi^{d/2}}{\Gamma\left( \frac{d}{2} + 1 \right)} \left( M^{d+1} +  M^{d+1} \right)
	+ \frac{ 2b_{p,l} \gamma_l}{ M^{l - d}}
\right]
 \\ &\leq& \hat{C}_{l,p} A^{1 - \frac{d+1}{l+1}},\label{mordor}
\end{eqnarray}
by choosing $M := A^{-1/(l+1)}$ and setting the constant properly as 
\begin{equation*}
  \hat{C}_{l,p} := \frac{2h_p d}{(2\pi)^d} \left(
\frac{C^{\circ}_{p} \pi^{d/2}}{ \Gamma\left( \frac{d}{2} + 1 \right)} + b_{p,l} \gamma_l
\right)
\end{equation*}
Note that, choosing $l$ large enough, \eqref{infi} for $|\alpha|=0$ follows from \eqref{mordor}.

Again, fixing $M\geq 1$, $l> d$ we can estimate as follows, using Cauchy's and Markov's inequalities:
\begin{eqnarray*}& & 
\int_{\mathbb{R}^{d}} |f_{\xi}(x)-f_{\eta}(x)||x|^{p}\, dx \\
&\leq & \int_{|x|>M} [f_{\xi}(x)+f_{\eta}(x)]|x|^{p}\, dx + \int_{|x|\leq M} [f_{\xi}(x)-f_{\eta}(x)]|x|^{p}\, dx\\ 
&\leq& E[|\xi|^{p}1_{\{|\xi|>M\}}+|\eta|^{p}1_{\{|\xi|>M\}}]+ \hat{C}_{l,p}  \frac{\pi^{d/2}}{\Gamma\left( \frac{d}{2} + 1 \right)} M^{d+p}  
A^{1 - \frac{d+1}{l+1}} \\
&\leq& E^{1/2}[|\xi|^{2p}]P^{1/2}(|\xi|>M)+E^{1/2}[|\eta|^{2p}]P^{1/2}(|\eta|>M)+\hat{C}_{l,p}  \frac{\pi^{d/2}}{\Gamma\left( \frac{d}{2} + 1 \right)} 
M^{d+p}  A^{1 - \frac{d+1}{l+1}} \\
&\leq& 
 \frac{2(a_{0,2p}a_{0,2l})^{1/2}}{M^{l}}  +\hat{C}_{l,p}  
 \frac{\pi^{d/2}}{\Gamma\left( \frac{d}{2} + 1 \right)} M^{d+p}  A^{1 - \frac{d+1}{l+1}}  
\end{eqnarray*}
whence, choosing $M := A^{- \frac{l - d}{(l+1)(l + p+d)} }$,
$\theta_{l,p} := \frac{(l - d)l}{(l+1)(l + p+d)}$,
$$
\int_{\mathbb{R}^{d}} |f_{\xi}(x)-f_{\eta}(x)||x|^{p}\, dx \leq \bar{C}_{l,p}[M^{-l}+M^{d+p}A^{(1 - \frac{d+1}{l+1})}]\leq 
2\bar{C}_{l,p}A^{\theta_{l,p}}
$$
for a constant 
\begin{equation*}
\bar{C}_{l,p} :=
\frac{\hat{C}_{l,p} \pi^{d/2} }{ \Gamma\left( \frac{d}{2} + 1 \right) } +2{(a_{0,2p} a_{0,l})^{1/2} }.
\end{equation*}

Notice that the above argument can also be performed for $p=0$ and leads to
$$
\int_{\mathbb{R}^{d}} |f_{\xi}(x)-f_{\eta}(x)|\, dx\leq \bar{C}_{l,0}A^{\theta_{l,0}}
$$
hence
\begin{equation*}
\rho_p(\mathcal{L}(\xi), \mathcal{L}(\eta)) \leq 2\bar{C}_{l,p} A^{\theta_{l,p}}+\bar{C}_{l,0}A^{\theta_{l,0}}\leq (2\bar{C}_{l,p}+
\bar{C}_{l,0})A^{\theta_{l,p}}
\end{equation*}
follows. For any $\epsilon \in (0,1)$, choose 
\begin{equation*}
l := \left\lceil \frac{d + \sqrt{1-\epsilon}(p+d)}{1-\sqrt{1-\epsilon}}\right\rceil.
\end{equation*}
Since 
$$
\theta_{l,p}=\frac{(l-d)l}{(l+p+d)(l+1)}\geq \frac{(l-d)^{2}}{(l+p+d)^{2}}\geq 1-\epsilon
$$ 
with this choice of $l$, \eqref{weighted} follows for $0<A\leq 1$. To see \eqref{infi} in the case $|\alpha|\neq 0$,
notice that, from the proof of Proposition \ref{equiv1},
$$
(-i)^{|\alpha|}\partial_{\alpha}f_{\xi}(x)\sum_{j=1}^{d} |x_{j}|^{p}=
\frac{(-i)^{p}}{(2\pi)^{d}}\int_{\mathbb{R}^{d}}u_{1}^{\alpha_{1}}\ldots u_{d}^{\alpha_{d}}\Delta_{p}\phi_{\xi}(u)e^{-i\langle u,x\rangle}\, du,
$$ 
and similarly for $f_{\eta}$. We may thus write
\begin{eqnarray}\label{majkell2}
& & |\partial_{\alpha}f_{\xi}(x)-\partial_{\alpha}f_{\eta}(x)| \left[\sum_{j=1}^{d}|x_{j}|^{p}\right] 
\\ &\leq&\nonumber \frac{1}{(2\pi)^{d}}\int_{|u|\leq M}|\Delta_{p}\phi_{\xi}(u)-\Delta_{p}\phi_{\eta}(u)||u|^{|\alpha|}\, du
+\frac{1}{(2\pi)^{d}}\int_{|u|>M} (|\Delta_{p}\phi_{\xi}(u)|+|\Delta_{p}\phi_{\eta}(u)|)|u|^{|\alpha|}\, du\\  &\leq& \nonumber
C\left[ { AM^{d+1+|\alpha|}} + 1/M^{l-d-|\alpha|} \right],
\end{eqnarray}
which, choosing $M:=A^{-\frac{1}{l+1}}$, shows
$$
|\partial_{\alpha}f_{\xi}(x)-\partial_{\alpha}f_{\eta}(x)| \left[\sum_{j=1}^{d}|x_{j}|^{p}\right]\leq A^{(l-d-|\alpha|)/(l+1)},
$$
which, for $l$ large enough, implies \eqref{infi}. 

If $A>1$ then the previous arguments show that $\rho_{p}(\xi,\eta),||(f_{\xi}(x)-f_{\eta}(x))|x|^{p}||_{\infty}$
are bounded by constants $C,C'$, we omit the trivial but tiresome details. Clearly, $C\leq CA$, $C'\leq C' A$
so \eqref{weighted} as well as \eqref{infi} follow in this case, too. 
\end{proof}

\begin{proof}[Proof of Lemma \ref{main2}] For simplicity, we are less careful with constants in this proof than in the previous one.
Clearly, \eqref{intim2} implies, by Proposition \ref{equiv1}, that the hypotheses of 
Lemma \ref{fourier1} hold hence the arguments of the previous proof are applicable. We assume first that $0<A<e^{-r_{p}}$.
Instead of \eqref{majkell} we can write, for $M\geq 1$
\begin{eqnarray*}\nonumber
& & |f_{\xi}(x)-f_{\eta}(x)| \left[\sum_{j=1}^{d}|x_{j}|^{p}\right] 
\\ &\leq&\nonumber \int_{|u|\leq M}|\Delta_{p}\phi_{\xi}(u)-\Delta_{p}\phi_{\eta}(u)|\, du
+\int_{|u|>M} (|\Delta_{p}\phi_{\xi}(u)|+|\Delta_{p}\phi_{\eta}(u)|)\, du\\  &\leq& \nonumber
\frac{2M^{d+1}C^{\circ}_p A d \pi^{d/2}}{\Gamma\left(\frac{d}{2} + 1\right)} 
+ \int_{|u|>M} (|\Delta_{p}\phi_{\xi}(u)|+|\Delta_{p}\phi_{\eta}(u)|)e^{r_{p}|u|/2}e^{-r_{p}|u|/2}\, du\\  &\leq& \nonumber
\frac{2M^{d+1}C^{\circ}_p A d \pi^{d/2}}{\Gamma\left(\frac{d}{2} + 1\right)}\\ 
&+& 
\left(\int_{\mathbb{R}^{d}} (|\Delta_{p}\phi_{\xi}(u)|+|\Delta_{p}\phi_{\eta}(u)|)^{2}e^{r_{p}|u|}\,du{}\right)^{1/2}
\left(\int_{|u|\geq M}e^{-r_{p}|u|}\, du\right)^{1/2}\\
&\leq& C\left[ M^{d+1}A+ \left(\int_{M}^{\infty}e^{-r_{p}y}y^{d-1}\, dy\right)^{1/2}\right]\\
&\leq& C'[M^{d+1}+e^{-r_{p}M/2}M^{(d-1)/2}],
\end{eqnarray*}
with constants $C,C'$ so choosing $M:=2\ln|A|/r_{p}$ we arrive at 
$$
|f_{\xi}(x)-f_{\eta}(x)| \left[\sum_{j=1}^{d}|x_{j}|^{p}\right]\leq C'' A|\ln A|^{d+1}.
$$
Arguing further with Markov's inequality,
\begin{eqnarray*}
& & \int_{\mathbb{R}^{d}}|f_{\xi}(x)-f_{\eta}(x)||x|^{p}\, dx\\
&\leq& \int_{|x|\leq M}C''A |\ln A|^{d+1}\, dx +\int_{|x|>M} [f_{\xi}(x)+f_{\eta}(x)]\, dx\\
&\leq& C'''M^{d}A |\ln A|^{d+1}+\left[E[e^{r|\xi|}+e^{r|\eta|}]\right]e^{-rM}\\
&\leq& C'''M^{d}A |\ln A|^{d+1}+2C^{\sharp}e^{-rM}.	
\end{eqnarray*} 
Choosing $M:=|\ln A|/r$, we arrive at
$$
\int_{\mathbb{R}^{d}}|f_{1}(x)-f_{2}(x)||x|^{p}\, dx\leq C'''' A|\ln A|^{2d+1}.
$$
This follows easily for $p=0$ as well, so we can conclude for the present case.
The case $A>e^{-r_{p}}$ is also easily handled, we omit details.
\end{proof}

\begin{remark}{\rm It is desirable to find a sufficient condition for \eqref{intim2} 
in terms of densities only.
For simplicity, let $d=1$. Let $\mathbf{F}$ be a collection of holomorphic functions on the strip
$$
\{z\in\mathbb{C}: z=a+bi,\ a\in\mathbb{R},\ -\varrho<b<\varrho\}
$$
for some $\varrho>0$ such that the restriction of each $f\in\mathbf{F}$ is a density on $\mathbb{R}$ and let
\begin{equation}\label{suppy}
\sup_{f\in\mathbb{F}}\sup_{-\varrho_{n}<b<\varrho_{n}} \int_{\mathbb{R}}x^{2k}f^{2}(a+bi)\, da<C_{k}
\end{equation}
hold for all $k\in\mathbb{N}$ with some (finite) $C_{k}$ and $0<\varrho_{k}\leq \varrho$. We claim that, for each $k$, 
$$
\sup_{f\in\mathbf{F}}\int_{\mathbb{R}^{d}}|\phi_{f}^{(k)}(u)|e^{r_{k}|u|}\leq c_{k}
$$
holds in this case for suitable $r_{k},c_{k}>0$, here $\phi_{f}$ is the characteristic function corresponding to $f$. 

We only sketch this for $k=0$, the general case being analogous. Recall the relevant version of the Paley-Wiener theorem (Theorem IV of
\cite{pw}, note that this is stated for the \emph{inverse} Fourier transform according to our terminology,
but it nevertheless holds true also for the Fourier transform) which implies that  
$$
\sup_{f\in\mathbf{F}}\int_{\mathbb{R}^{d}}|\phi_{f}(u)|^{2}e^{\bar{r}|u|}\, du\leq C_{\bar{r}}
$$
holds for suitable $\bar{r}$, $C_{\bar{r}}>0$. The Cauchy inequality guarantees
\begin{eqnarray*}
& & \int_{\mathbb{R}^{d}}|\phi_{f}(u)|e^{\bar{r}|u|/4}\, du\\
&=& \int_{\mathbb{R}^{d}}|\phi_{f}(u)|e^{\bar{r}|u|/2}e^{-\bar{r}|u|/4}\, du\\
&\leq& \left(\int_{\mathbb{R}^{d}}|\phi_{f}(u)|^{2}e^{\bar{r}|u|}\, du\right)^{1/2}
\left(\int_{\mathbb{R}^{d}}e^{-\bar{r}|u|/2}\, du\right)^{1/2}
\end{eqnarray*}
so we may choose $r_{0}:=\bar{r}/4$ and
$$
c_{0}:=\sqrt{C_{\bar{r}}}\left(\int_{\mathbb{R}^{d}}e^{-\bar{r}|u|/2}\, du\right)^{1/2}<\infty.
$$
}
\end{remark}

\section{Convergence of functionals in Malliavin calculus}\label{sec4}

This section recalls the necessary analytical framework of Malliavin calculus required 
for the derivation of our main regularity and approximation results. 
Our approach follows the standard constructions presented in \cite[Chapter~2]{nualart} and \cite[Section~2.1]{bally}.

\subsubsection*{Canonical Wiener Space}

Let \( m \in \mathbb{N} \) be fixed. We define the \emph{canonical Wiener space} as the triplet
\[
(\Omega, \mathcal{F}, P) := \left(C_0([0,1]; \mathbb{R}^m), \mathcal{B}, P\right),
\]
where:
\begin{itemize}
    \item \( \Omega := C_0([0,1]; \mathbb{R}^m) \) is the Banach space of continuous paths \(\omega: [0,1] \to \mathbb{R}^m\) such that \(\omega(0) = 0\), equipped with the supremum norm;
    \item \( \mathcal{B}\) is the Borel \(\sigma\)-algebra on \(\Omega\);
    \item \( P \) is the \emph{Wiener measure} on \((\Omega, \mathcal{B})\), under which the coordinate process defined by
    \[
    B_t(\omega) := \omega(t), \quad \omega \in \Omega,\ t \in [0,1],
    \]
    is a standard \(m\)-dimensional Brownian motion.
\end{itemize}

\subsubsection*{Cameron--Martin Space}

The \emph{Cameron--Martin space} \(\mathcal{H}\) associated to this probability space is the Hilbert space defined by:
\[
\mathcal{H} := \left\{ h \in C([0,1]; \mathbb{R}^m): h(t) = \int_0^t \dot{h}_s \, ds,\ \dot{h} \in L^2([0,1]; \mathbb{R}^m) \right\}.
\]
Here, for \( h \in \mathcal{H} \), the notation \( \dot{h} \) means that $h$ is absolutely conrinuous
with (Lebesgue-a.e.) derivative $\dot{h}$.

We equip \( \mathcal{H} \) with the inner product:
\[
\langle h, k \rangle_{\mathcal{H}} := \int_0^1 \langle \dot{h}_s, \dot{k}_s \rangle_{\mathbb{R}^m} \, ds,
\]
and the corresponding norm \( \|h\|_{\mathcal{H}} := \sqrt{\langle h, h \rangle_{\mathcal{H}}} \).

For each \( h \in \mathcal{H} \), the \emph{Wiener integral} of \( h \) is defined as
    \[
    W(h) := \int_0^1 \langle \dot{h}_t, dB_t \rangle_{\mathbb{R}^m}.
    \]

\subsubsection*{The Malliavin Derivative and Sobolev Spaces $\mathbb{D}^{k,p}$}

Let \( n \in \mathbb{N} \), and let \( h_1, \ldots, h_n \in \mathcal{H} \) be fixed. Let \( f \in C_b^\infty(\mathbb{R}^n) \), the space of smooth functions with bounded partial derivatives of all orders. Then we define a \emph{smooth cylindrical functional} 
\( F: \Omega \to \mathbb{R} \) by
\[
F(\omega) := f(W(h_1), \ldots, W(h_n)), \quad \omega \in \Omega.
\]
The class of all such functionals is denoted by $\mathcal{S}$,
they form the basic class on which the Malliavin derivative is initially defined and they are dense in \( L^p(\Omega) \) for all \( p \geq 1 \).

For $F\in\mathcal{S}$, its \emph{Malliavin derivative} \( D_t F \in \mathbb{R}^m \) is defined for \( t \in [0,1] \) by
\[
D_t F := \sum_{i=1}^n \partial_i f(W(h_1), \ldots, W(h_n)) \cdot \dot{h}_i(t),
\]
where \( \partial_i f \) denotes the partial derivative of \( f \) with respect to its \( i \)-th coordinate.

This definition provides a process \( D F := (D_t F)_{t \in [0,1]} \in L^2([0,1]; \mathbb{R}^m) \), and may be 
interpreted as the derivative of \( F \) along infinitesimal shifts in the direction of the Brownian motion.

Let \( 1\leq p<\infty\). A random variable \( F \in L^p(\Omega) \) is said to belong to the Malliavin--Sobolev space \( \mathbb{D}^{1,p} \) 
if there exists a stochastic process \( D_t F \in L^p(\Omega \times [0,1]; \mathbb{R}^m) \) such that for all \( h \in \mathcal{H} \),
\[
\mathbb{E}[\partial_h F] = \mathbb{E}[\langle D F, h \rangle_{\mathcal{H}}].
\]
Alternatively, the derivative operator defined on $\mathcal{S}$ is closable, and $\mathbb{D}^{1,p}$ is the domain of its closure,
see \cite{nualart} for details.
We equip \( \mathbb{D}^{1,p} \) with the norm
\[
\|F\|_{1,p}^p := \mathbb{E}[|F|^p] + \mathbb{E}\left[ \left( \int_0^1 |D_t F|^2 \, dt \right)^{p/2} \right],
\]
and denote by \( \mathbb{D}^{1,p} \) the closure of the smooth cylindrical functionals under this norm.

Let \( k \geq 1 \) be an integer. The \( k \)-th order Malliavin derivative \( D^k F \) is defined recursively by
\[
D^k F := D(D^{k-1} F),
\]
with \( D^0 F := F \). For each \( k \), \( D^k F \) is a random field indexed by \( [0,1]^k \), 
taking values in the tensor product space \( (\mathbb{R}^m)^{\otimes k} \).

We define the Malliavin--Sobolev space \( \mathbb{D}^{k,p} \) to consist of all \( F \in L^p(\Omega) \) such that
\[
D^j F \in L^p\left(\Omega; L^2([0,1]^j; (\mathbb{R}^m)^{\otimes j}) \right), \quad \text{for all } 1 \leq j \leq k.
\]
The norm on \( \mathbb{D}^{k,p} \) is given by
\[
\|F\|_{k,p}^p := \mathbb{E}[|F|^p] + \sum_{j=1}^k \mathbb{E}\left[ \left( \int_{[0,1]^j} |D^j_{t_1, \ldots, t_j} F|^2 dt_1 \cdots dt_j \right)^{p/2} \right].
\]

We define
\[
\mathbb{D}^\infty := \bigcap_{k \geq 1} \bigcap_{p \geq 1} \mathbb{D}^{k,p}.
\]

For instance, solutions of stochastic differential equations with smooth coefficients lie in $\mathbb{D}^{\infty}$, see Section 2.2 of
\cite{nualart}. 

\subsubsection*{Non-Degeneracy and Existence of Smooth Densities}

Let \( F = (F^1, \ldots, F^d) \in (\mathbb{D}^{1,2})^d \) be a random vector defined on the Wiener space \( (\Omega, \mathcal{F}, {P}) \). 
The \emph{Malliavin covariance matrix} (also called the \emph{Malliavin matrix}) associated to \( F \) is the random \( d \times d \) symmetric matrix
\[
\sigma_F := \left( \langle D F^i, D F^j \rangle_{\mathcal{H}} \right)_{1 \leq i,j \leq d} = \left( \int_0^1 \langle D_t F^i, D_t F^j \rangle_{\mathbb{R}^m} dt \right)_{1 \leq i,j \leq d}.
\]

We say that the random vector \( F \in (\mathbb{D}^{1,2})^d \) is \emph{non-degenerate} in the sense of Malliavin calculus if:
\begin{equation}\label{eq:nondeg}
\forall r > 0, \quad \mathbb{E}\left[ |\det \sigma_F|^{-r} \right] < \infty.
\end{equation}
This condition ensures that \( \sigma_F \) is invertible almost surely and that its 
inverse moments are finite. 

If $F$ is smooth ($F \in (\mathbb{D}^{q,\infty})^d$ for some $q$ large enough) and non-degenerate then Malliavin calculus allows
to show that it has a (smooth) density. Tail estimates (from above as well as from below) can also be given for the density,
see \cite{nualart, bally}. We recall one concrete result here, \cite[Theorem~2.3.1]{bally}.

\begin{theorem}\label{thm:bally-caramellino}
Let \( q \in \mathbb{N} \), and suppose that \( F \in (\mathbb{D}^{q+2,\infty})^d \) satisfies the non-degeneracy condition \eqref{eq:nondeg}. Then:
\begin{enumerate}
    \item The law of \( F \) has a $q$ times continuously differentiable density \( f_F  \).
    \item For every multi-index \( \alpha \in \mathbb{N}^d \) with 
    \( |\alpha| \leq q \) and for all $l\in\mathbb{N}$, there exists a constant \( C > 0 \) 
    depending on \( \|F\|_{q+2,p} \), \( \|\det \sigma_F^{-1}\|_{r} \), $l$ 
    and \( |\alpha| \), such that
    \[
    |\partial_{\alpha} f_F(x)| \leq \frac{C}{(1 + |x|)^l}, 
    \quad \text{for all } x \in \mathbb{R}^d.
    \]
\end{enumerate}
\end{theorem}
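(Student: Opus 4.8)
The plan is to run the integration-by-parts (IBP) machinery of Malliavin calculus. The cornerstone is the IBP formula: for a non-degenerate, sufficiently smooth $F$ and every multiindex $\beta$ there is a weight $H_\beta=H_\beta(F)\in\bigcap_{p\geq 1}L^p(\Omega)$, built recursively from the iterated derivatives $D^jF$, the inverse Malliavin matrix $\sigma_F^{-1}$ and the Skorokhod divergence operator, such that
$$
E\bigl[(\partial_\beta\varphi)(F)\bigr]=E\bigl[\varphi(F)\,H_\beta\bigr],\qquad \varphi\in C_b^\infty(\mathbb{R}^d).
$$
First I would secure the $L^p$-bounds on these weights. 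By Cramer's rule the entries of $\sigma_F^{-1}$ are polynomials in the entries of $\sigma_F$ divided by $\det\sigma_F$; the non-degeneracy condition \eqref{eq:nondeg} furnishes all negative moments of $\det\sigma_F$, and $F\in(\mathbb{D}^{q+2,\infty})^d$ gives every $D^jF$ in every $L^p$. Hölder's inequality then yields $\|H_\beta\|_{L^p}\leq C$ with $C$ depending only on $\|F\|_{q+2,p'}$, $\|\det\sigma_F^{-1}\|_r$ and $|\beta|$, as long as $|\beta|$ stays within the available order of differentiability.

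Next I would convert this into a pointwise representation of the density and its derivatives. The cleanest conceptual route applies the IBP formula with the full multiindex $\mathbf 1=(1,\dots,1)$ against a regularisation of the Dirac mass at $x$, yielding $\partial_\alpha f_F(x)=E[\mathbf 1_{\{F\geq x\}}\widetilde H_\alpha]$ for a weight $\widetilde H_\alpha$ of the above type; that $f_F\in C^q$ then follows by differentiating under the expectation, legitimate since all weights lie in every $L^p$. This establishes part (1). To reach the sharp, dimension-free regularity threshold $\mathbb{D}^{q+2,\infty}$ announced in the statement (rather than the cruder $\mathbb{D}^{q+d+1,\infty}$ that the orthant representation costs), I would instead use the Malliavin--Thalmaier representation, in which the density is expressed through the Poisson kernel of $\mathbb{R}^d$ and only a single IBP is spent on top of the $|\alpha|$ derivatives, as carried out in \cite[Chapter~2]{bally}.

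For the decay in part (2), the crucial point is that the representing event can be oriented towards large $|F|$. Given $x$, pick the coordinate $j$ with $|x_j|=\max_i|x_i|\geq|x|/\sqrt d$ and orient the orthant indicator along the sign of $x_j$, so that on the relevant event $|F_j|\geq|x_j|\geq|x|/\sqrt d$, hence $|F|\geq|x|/\sqrt d$. By Cauchy--Schwarz and Markov's inequality, for every $m\in\mathbb{N}$,
$$
|\partial_\alpha f_F(x)|\leq\bigl\|\widetilde H_\alpha\bigr\|_{L^2}\,P\bigl(|F|\geq|x|/\sqrt d\bigr)^{1/2}\leq\bigl\|\widetilde H_\alpha\bigr\|_{L^2}\,\frac{E^{1/2}[|F|^{2m}]}{(|x|/\sqrt d)^{m}}.
$$
As $F\in(\mathbb{D}^{q+2,\infty})^d$ has finite moments of all orders, taking $m$ large and absorbing the bounded region $|x|\leq 1$ into the constant produces $|\partial_\alpha f_F(x)|\leq C(1+|x|)^{-l}$ for any prescribed $l$, with $C$ depending on the stated quantities. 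When the Malliavin--Thalmaier representation is used instead, the same estimate holds but one must additionally split according to $\{|F-x|\leq|x|/2\}$ and its complement to control the mild singularity of the Poisson kernel; on the far tail this singular set is a rare event and the kernel decays like $|x|^{-(d-1)}$, so the argument goes through.

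The main obstacle, and the genuinely technical part, is the first step together with its refinement: constructing the weights explicitly, bounding their $L^p$-norms through the negative moments of $\det\sigma_F$ supplied by \eqref{eq:nondeg}, and ---above all--- verifying that $\mathbb{D}^{q+2,\infty}$ is exactly enough to carry the recursion via the Malliavin--Thalmaier representation. This bookkeeping is precisely the content of \cite[Chapter~2]{bally}, to which we refer for the complete argument.
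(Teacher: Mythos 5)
The paper offers no proof of this statement at all---it is recalled directly from \cite[Theorem~2.3.1]{bally}---and your proposal correctly sketches exactly the argument underlying that cited result (integration-by-parts weights controlled via the negative moments of $\det\sigma_F$, the oriented-orthant representation with Cauchy--Schwarz and Markov for the polynomial decay, and the Malliavin--Thalmaier/Poisson-kernel representation to reach the dimension-free threshold $\mathbb{D}^{q+2,\infty}$), deferring the technical bookkeeping to the same reference. So your treatment is essentially the same as the paper's, only more explicit about what the cited proof actually does.
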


\begin{remark}
{\rm The constants in the above bounds can be made explicit in terms of norms \( \|F\|_{k,p} \), inverse moments 
\( \|\det \sigma_F^{-1}\|_r \), and dimension-dependent combinatorics (cf. \cite[Eq.~(2.88), (2.89)]{bally}). The decay estimate
\[
|\partial_{\alpha} f_F(x)| \leq \frac{C}{(1 + |x|)^l}
\]
is particularly important in distributional approximations, as it guarantees integrability and localization of the density and its derivatives.}
\end{remark}

\subsubsection*{Convergence of smooth, non-degenerate functionals}

The conclusion of the next proposition -- that each functional \( F_i \in (\mathbb{D}^{q+2,\infty})^d \) possesses a 
\( C^\infty \)-density \( f_i \) with globally controlled polynomially weighted derivatives -- follows as a 
direct consequence of Theorem~\ref{thm:bally-caramellino} above.

\begin{proposition}\label{malliavin-estimate} Let $I$ be an arbitrary
index set. Let $q\geq 1$ be an integer and
let $F_{i}\in (\mathbb{D}^{q+2,\infty})^{d}$, $i\in I$ be a family of functionals
with corresponding Malliavin matrices $\sigma_{i}$.
Let 
$$
C_{r}^{\sigma}:=\sup_{i\in I}E[|\mathrm{det}(\sigma_{i})|^{-r}]<\infty,
$$
as well as
$$
C_{r}^{F}:=\sup_{i\in I}\Vert F_{i}\Vert_{q+2,r}<\infty,
$$
hold for all $r\geq 1$. Then $F_{i}$ has a continuous density $f_{i}$
with respect to the $d$-dimensional Lebesgue measure,
and for all $l\geq 1$ we have
\begin{equation}\label{kelli}
\max_{|\alpha|\leq q}\sup_{i\in I}\sup_{x\in\mathbb{R}^{d}}|\partial_{\alpha} f_{i}(x)|(1+|x|)^{l}\leq C_{q,l}^{F}
\end{equation}
for some finite $C_{q,l}^{F}$.
\end{proposition}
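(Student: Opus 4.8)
The plan is to apply Theorem~\ref{thm:bally-caramellino} to each $F_i$ individually and then extract uniformity in $i$ from the explicit dependence of its constant on the data. First I would note that the hypothesis $C_r^\sigma<\infty$ for all $r\geq 1$ gives, for each fixed $i\in I$, the bound $E[|\det(\sigma_i)|^{-r}]\leq C_r^\sigma<\infty$ for every $r$; hence every $F_i$ satisfies the non-degeneracy condition \eqref{eq:nondeg}. Combined with $F_i\in(\mathbb{D}^{q+2,\infty})^d$, this places each $F_i$ in the scope of Theorem~\ref{thm:bally-caramellino}. Consequently, each $F_i$ admits a $q$ times continuously differentiable (in particular continuous) density $f_i$, and for every multiindex $\alpha$ with $|\alpha|\leq q$ and every $l\in\mathbb{N}$ there is a constant $C_i=C_i(\alpha,l)$ with $|\partial_\alpha f_i(x)|\leq C_i/(1+|x|)^l$ for all $x$. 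This already yields the existence and regularity of $f_i$ and a bound of the correct shape; what remains is to make the constant independent of $i$.

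The heart of the argument is therefore the uniformity step. By the statement of Theorem~\ref{thm:bally-caramellino}, together with the explicit form of the constant recalled in the Remark that follows it (cf. \cite[Eq.~(2.88),~(2.89)]{bally}), the constant $C_i$ depends on $F_i$ only through the Sobolev norm $\|F_i\|_{q+2,p}$ and the inverse moment $\|\det\sigma_i^{-1}\|_r$, for some exponents $p=p(d,q,l,|\alpha|)$ and $r=r(d,q,l,|\alpha|)$ that are fixed once $d,q,l,|\alpha|$ are fixed and do not depend on $i$. Moreover this dependence is monotone: the integration-by-parts procedure underlying the theorem produces $C_i$ as a nondecreasing function of $\|F_i\|_{q+2,p}$ and of $\|\det\sigma_i^{-1}\|_r$.

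I would then bound these two quantities uniformly in $i$. The Sobolev norm is controlled directly by hypothesis, $\|F_i\|_{q+2,p}\leq C_p^F$. For the inverse moment, writing $\det\sigma_i^{-1}=(\det\sigma_i)^{-1}$ gives
\[
\|\det\sigma_i^{-1}\|_r=E^{1/r}\bigl[|\det\sigma_i|^{-r}\bigr]\leq (C_r^\sigma)^{1/r},
\]
again uniformly in $i$. Substituting these uniform bounds into the monotone constant from Theorem~\ref{thm:bally-caramellino} produces a single constant $C(\alpha,l)$, valid for all $i\in I$, such that $|\partial_\alpha f_i(x)|(1+|x|)^l\leq C(\alpha,l)$. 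Taking the maximum over the finitely many multiindices with $|\alpha|\leq q$ and setting $C_{q,l}^F:=\max_{|\alpha|\leq q}C(\alpha,l)$ then gives \eqref{kelli}.

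The main obstacle is precisely this monotonicity/uniformity step: one must verify that the constant delivered by Theorem~\ref{thm:bally-caramellino} really is a nondecreasing function of $\|F_i\|_{q+2,p}$ and of the inverse moments, with $p$ and $r$ selected once $d,q,l,|\alpha|$ are chosen, so that the index $i$ enters only through the two quantities the hypotheses control uniformly. This is exactly what the explicit constants in \cite[Eq.~(2.88),~(2.89)]{bally} supply, so no genuinely new estimate is required; the whole content of the proof lies in tracking this dependence carefully.
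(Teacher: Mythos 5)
Your proposal is correct, but it takes a different route from the paper's formal proof: the paper disposes of this proposition with a one-line citation (``See Proposition 5.1 in \cite{rate}''), outsourcing the entire argument to an external reference, whereas you derive it from Theorem~\ref{thm:bally-caramellino} together with the explicit-constant remark following it. Interestingly, your route is exactly what the paper's own narration promises just before the proposition (``follows as a direct consequence of Theorem~\ref{thm:bally-caramellino} above''), so you have in effect supplied the argument the paper gestures at but never writes down. Your reduction is sound: non-degeneracy of each $F_i$ follows from $C_r^\sigma<\infty$, the theorem gives a density of class $C^q$ with the polynomially weighted bounds, and uniformity in $i$ comes from the fact that the constant enters only through $\Vert F_i\Vert_{q+2,p}$ and $\Vert\det\sigma_i^{-1}\Vert_r=E^{1/r}[|\det\sigma_i|^{-r}]\leq (C_r^\sigma)^{1/r}$, both bounded by hypothesis. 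The one load-bearing claim you cannot verify from the theorem's statement alone is that the constant is a monotone (or at least bounded-on-bounded-sets) function of these two norms, with exponents $p,r$ fixed by $d,q,l,|\alpha|$ alone; you correctly identify this as the crux and correctly note that the explicit formulas \cite[Eq.~(2.88),~(2.89)]{bally} supply it. The trade-off: your argument is self-contained modulo those explicit constants in \cite{bally}, while the paper's citation to \cite{rate} buys brevity at the cost of making the reader chase a second reference for what is, as you show, a short bookkeeping argument.
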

\begin{proof}
See Proposition 5.1 in \cite{rate}. 	
\end{proof}

The next result follows from Lemma 3.9 and Proposition 3.15 in \cite{bc2}, see
also \cite{bc1} for earlier related results. 
Recall that, for arbitrary $\mu_{1},\mu_{2}\in\mathcal{P}$ their \emph{Fortet-Mourier distance} is
defined by 
$$
d_{FM}(\mu_{1},\mu_{2}):=\sup_{f\in\mathcal{M}_{1}}\left|\int_{\mathbb{R}^{d}}f(x)\mu_{1}(dx)-\int_{\mathbb{R}^{d}}f(x)\mu_{2}(dx)\right|,
$$
where $\mathcal{M}_{1}$ is the set of continuously differentiable functions $f:\mathbb{R}^{d}\to\mathbb{R}$
such that $||f||_{\infty}\leq 1$ and $||\nabla f||_{\infty}\leq 1$. Clearly, this is a weaker metric than $W_{1}$.

\begin{theorem}\label{smoothsequence11} 
Let $F,F_{n}\in (\mathbb{D}^{\infty,\infty})^{d}$, $n\in \mathbb{N}$ be a family of functionals
with corresponding Malliavin matrices $\sigma,\sigma_{n}$.
Assume that, for all $r>0$, $q\geq 1$,
$$
E[|\mathrm{det}(\sigma)|^{-r}]+\sup_{n\in \mathbb{N}}E[|\mathrm{det}(\sigma_{n})|^{-r}]<\infty,
$$
and also 
$$
\Vert F\Vert_{q,r}+\sup_{n\in\mathbb{N}}\Vert F_{n}\Vert_{q,r}<\infty.
$$	
Then for all integers $k\geq 1$ and for all $\epsilon>0$ there is a constant $C_{\epsilon,k}$ such that
$$
d_{TV}(\mathcal{L}(F_{n}),\mathcal{L}(F))\leq C_{\epsilon,k}d_{FM}^{1-\epsilon}(\mathcal{L}(F_{n}),\mathcal{L}(F)).
$$
Furthermore, for the respective densities $f,f_{n}$, for all multiindices $\alpha$,
$$
\sup_{x\in\mathbb{R}^{d}}|\partial_{\alpha}f_{n}(x)-\partial_{\alpha}f(x)|\leq C_{\epsilon,k,\alpha}
d_{FM}^{1-\epsilon}(\mathcal{L}(F_{n}),\mathcal{L}(F)).
$$
\hfill $\Box$
\end{theorem}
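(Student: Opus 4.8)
The plan is to deduce both conclusions from the paper's own Fourier-inversion machinery, replacing the Wasserstein input of Lemma \ref{fourier1} by a direct Fortet--Mourier estimate. First I would apply Proposition \ref{malliavin-estimate} to the family $\{f\}\cup\{f_{n}:n\in\mathbb{N}\}$: the uniform bounds on the Malliavin norms and on the inverse moments of the determinants give, for every multiindex $\alpha$ and every $l\in\mathbb{N}$, a uniform estimate of the form \eqref{kelli}, namely $\sup_{n}\sup_{x}|\partial_{\alpha}f_{n}(x)|(1+|x|)^{l}\leq C_{|\alpha|,l}$, and the same for $f$. By the implication $1.\to 2.$ of Proposition \ref{equiv1} this transfers to the characteristic functions $\phi,\phi_{n}$, which are then smooth and satisfy the uniform rapid-decay bound \eqref{kiralyfi}, $|\phi_{n}(u)|\leq d_{0,l}/(1+|u|)^{l}$ for every $l$. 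This is exactly the frequency-side regularity that drives the proof of Lemma \ref{fourier1}.

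The only genuinely new ingredient is a direct bound on the difference of the characteristic functions in terms of $d_{FM}$. Writing $\phi_{n}(u)-\phi(u)=E[e^{i\langle u,F_{n}\rangle}]-E[e^{i\langle u,F\rangle}]$ and splitting into real and imaginary parts, the functions $x\mapsto\cos\langle u,x\rangle$ and $x\mapsto\sin\langle u,x\rangle$ have supremum norm $1$ and Lipschitz constant $|u|$, so after division by $\max(1,|u|)$ they belong to the test class $\mathcal{M}_{1}$. Hence
\[
|\phi_{n}(u)-\phi(u)|\leq C(1+|u|)\,d_{FM}(\mathcal{L}(F_{n}),\mathcal{L}(F)),
\]
which plays the role of estimate \eqref{fabol} in the special case $|\alpha|=0$, with the Wasserstein quantity $A$ replaced by $d_{FM}$.

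With these two facts I would re-run the argument of Lemma \ref{fourier1}. The key observation is that both target quantities are \emph{unweighted} ($p=0$), so Fourier inversion never requires derivatives of the characteristic functions: for the supremum-norm derivative bound one has $\partial_{\alpha}f_{n}(x)-\partial_{\alpha}f(x)=(2\pi)^{-d}\int(iu)^{\alpha}(\hat{f}_{n}-\hat{f})(u)e^{i\langle u,x\rangle}\,du$, whose integrand is a polynomial factor $|u|^{|\alpha|}$ times the single difference $\phi_{n}-\phi$. Splitting at $|u|=M$, bounding the low frequencies by $|u|^{|\alpha|}C(1+|u|)d_{FM}$ and the high frequencies by the decay from the first step, and optimizing $M=d_{FM}^{-1/(l+1)}$ gives $\sup_{x}|\partial_{\alpha}f_{n}(x)-\partial_{\alpha}f(x)|\leq C\,d_{FM}^{(l-|\alpha|-d)/(l+1)}$; taking $l$ large enough pushes the exponent above $1-\epsilon$. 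For the total variation bound, which is a constant multiple of $\int|f_{n}-f|\,dx$, I would follow the $p=0$ branch of Lemma \ref{fourier1}: the same frequency split yields a uniform pointwise bound on $|f_{n}-f|$, which is then integrated in $x$ by separating the spatial tail $\{|x|>M_{2}\}$ -- controlled by $P(|F_{n}|>M_{2})+P(|F|>M_{2})\leq C_{N}/M_{2}^{N}$ via the uniform polynomial moments guaranteed by \eqref{kelli} and Markov's inequality -- from the core $\{|x|\leq M_{2}\}$, on which the pointwise bound times the volume $\sim M_{2}^{d}$ is used; a second optimization over $M_{2}$ as a power of $d_{FM}$ produces the exponent $1-\epsilon$.

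The step I expect to be delicate is not an analytic obstacle but a matter of recognizing what can be avoided: because neither conclusion carries a weight $|x|^{p}$, one never meets the difficult estimate \eqref{fabol} for $|\alpha|\geq 1$, which in the Fortet--Mourier setting would force a truncation of the unbounded test functions $x^{\alpha}e^{i\langle u,x\rangle}$ balanced against tail moments. All the $u$-growth instead originates from the spatial derivative $\partial_{\alpha}$ and is absorbed by the uniform rapid decay of $\phi_{n}$. What genuinely requires care is the bookkeeping: tuning the frequency cutoff $M$, the spatial cutoff $M_{2}$, and the decay order $l$ simultaneously so that the exponent of $d_{FM}$ can be driven past $1-\epsilon$ for every $\epsilon>0$, and checking that every constant produced along the way is uniform over the whole (possibly infinite) family $\{F_{n}\}$ -- which is exactly what the suprema in the hypotheses and in Proposition \ref{malliavin-estimate} secure.
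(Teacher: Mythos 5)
Your argument is essentially correct, but it is genuinely different from the paper's treatment: the paper gives \emph{no proof at all} of Theorem \ref{smoothsequence11}, it simply cites Lemma 3.9 and Proposition 3.15 of \cite{bc2} (the regularization-lemma machinery of Bally--Caramellino--Poly), which is why the statement ends with a box. What you have produced instead is a self-contained derivation inside the paper's own Fourier toolkit, essentially re-proving the Fortet--Mourier-to-total-variation estimate of Corollary 2.2 of \cite{rate} (named in the introduction as the precursor of this work) and combining it with Proposition \ref{malliavin-estimate}. Your two key ingredients check out: rescaling $\cos\langle u,\cdot\rangle$ and $\sin\langle u,\cdot\rangle$ by $\max(1,|u|)$ does place them in $\mathcal{M}_{1}$, yielding $|\phi_{n}(u)-\phi(u)|\leq 2(1+|u|)\,d_{FM}$, and your observation that for the \emph{unweighted} conclusions ($p=0$) the inversion formula only ever involves the single difference $\phi_{n}-\phi$ multiplied by $|u|^{|\alpha|}$ coming from the spatial derivative --- never the derivative differences of \eqref{fabol}, which would indeed be problematic for $d_{FM}$ because the test functions $x^{\alpha}e^{i\langle u,x\rangle}$ are unbounded --- is precisely what makes the weak Fortet--Mourier input sufficient. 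The exponents also come out right: the split at $M=d_{FM}^{-1/(l+1)}$ gives $d_{FM}^{(l-|\alpha|-d)/(l+1)}$ for the sup-norm bounds, and the second spatial split balanced against the uniform moments (available either from \eqref{kelli} or directly from the hypothesis on $\Vert F_{n}\Vert_{q,r}$) pushes the total variation exponent above $1-\epsilon$ once $l$ and the moment order are large. As for what each approach buys: the paper's citation is short and rests on established results proved by quite different (non-Fourier) techniques, whereas your route keeps Section \ref{sec4} self-contained, makes every constant traceable through Propositions \ref{equiv1} and \ref{malliavin-estimate}, and exhibits Theorem \ref{smoothsequence11} as the $p=0$, Fortet--Mourier analogue of Lemma \ref{fourier1} rather than an imported black box. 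The only loose ends in your write-up are routine: the degenerate cases $d_{FM}=0$ or $d_{FM}>1$, the constraint $M\geq 1$ in the optimization, and the normalization constant relating $d_{TV}$ to $\int|f_{n}-f|\,dx$; all are handled exactly like the trivial cases at the end of the paper's proof of Lemma \ref{fourier1}.
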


Now we present the promised application of Lemma \ref{fourier1} to Malliavin calculus, this provides estimates
in \emph{weighted} total variation norm, in terms of Wasserstein distances.

\begin{theorem}\label{smoothsequence2} Let the conditions of Theorem \ref{smoothsequence11} above hold. 
Then, for all $p \geq 1$, $q > 1$, and $\epsilon > 0$, there exists a constant $C_{\epsilon, p, q}$ such that
\[
\rho_p(\mathcal{L}(F_n),\mathcal{L}(F)) \leq C_{\epsilon, p, q}  W_q(\mathcal{L}(F_n),\mathcal{L}(F))^{(1 - \epsilon)}.
\]
\end{theorem}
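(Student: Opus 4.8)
The plan is to verify that the densities of $F$ and of the $F_{n}$ satisfy the hypothesis \eqref{intim} of Lemma \ref{fourier1} \emph{uniformly in $n$}, and then to apply that lemma directly. The bridge between the Malliavin-calculus assumptions and the analytic condition \eqref{intim} is furnished by Proposition \ref{malliavin-estimate}, so the proof is essentially a matter of checking that these two results interface correctly.

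First I would apply Proposition \ref{malliavin-estimate} to the family indexed by $I:=\mathbb{N}\cup\{*\}$, where the member indexed by $*$ is $F$ and the member indexed by $n$ is $F_{n}$. The hypotheses of Theorem \ref{smoothsequence11} — finiteness of $E[|\det\sigma|^{-r}]$ and $\sup_{n}E[|\det\sigma_{n}|^{-r}]$, and boundedness of $\Vert F\Vert_{q,r}$ and $\sup_{n}\Vert F_{n}\Vert_{q,r}$ for all $q\geq 1$ and $r>0$ — are precisely the conditions $C_{r}^{\sigma}<\infty$ and $C_{r}^{F}<\infty$ required there. Since $F,F_{n}\in(\mathbb{D}^{\infty,\infty})^{d}$, the proposition is applicable with every integer $q$; applying it with $q\geq|\alpha|$ for each given multiindex $\alpha$ yields, for every $\alpha$ and every $l\in\mathbb{N}$, a finite constant $C_{|\alpha|,l}^{F}$ \emph{independent of $n$} with
$$
\sup_{n}\sup_{x\in\mathbb{R}^{d}}|\partial_{\alpha}f_{n}(x)|(1+|x|)^{l}+\sup_{x\in\mathbb{R}^{d}}|\partial_{\alpha}f(x)|(1+|x|)^{l}\leq 2C_{|\alpha|,l}^{F}.
$$

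Next I would translate these componentwise bounds into bounds on the derivative vectors $f^{(k)}$ appearing in \eqref{intim}. For each fixed order $k$ there are only finitely many multiindices $\alpha$ with $|\alpha|=k$, so the Euclidean norm $|f^{(k)}(x)|$ is dominated by a dimension-dependent constant times $\max_{|\alpha|=k}|\partial_{\alpha}f(x)|$. Combining this with the previous display produces a double sequence $d_{k,l}$, independent of $n$, such that $\Vert f_{n}^{(k)}(x)(1+|x|)^{l}\Vert_{\infty}+\Vert f^{(k)}(x)(1+|x|)^{l}\Vert_{\infty}\leq d_{k,l}$ for all $k,l$; in particular the densities lie in $C^{\infty}$. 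This is exactly condition \eqref{intim} for the pair $(F_{n},F)$. I would then apply Lemma \ref{fourier1} with $\xi:=F_{n}$ and $\eta:=F$: for the prescribed $p$, $q$, $\epsilon$ it supplies a constant $C$ depending only on the sequence $d_{k,l}$, on $p$, $q$, $d$, $\epsilon$ — hence independent of $n$ — giving $\rho_{p}(\mathcal{L}(F_{n}),\mathcal{L}(F))\leq C\,W_{q}(\mathcal{L}(F_{n}),\mathcal{L}(F))^{1-\epsilon}$, which is the claim.

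Since the analytic heavy lifting is already done in Lemma \ref{fourier1} and the regularity estimates are delivered by Proposition \ref{malliavin-estimate}, I do not expect a substantial obstacle. The one point requiring genuine care is the \emph{uniformity in $n$} of the constants $d_{k,l}$: this is what makes the final constant $C_{\epsilon,p,q}$ independent of $n$, and it is guaranteed precisely because Proposition \ref{malliavin-estimate} is stated for an arbitrary index family, with constants depending only on the uniform bounds $C_{r}^{\sigma}$, $C_{r}^{F}$ rather than on the individual functionals.
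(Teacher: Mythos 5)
Your proposal is correct and follows essentially the same route as the paper's own proof: both verify condition \eqref{intim} of Lemma \ref{fourier1} uniformly in $n$ via Proposition \ref{malliavin-estimate} and then invoke Lemma \ref{fourier1} for the pair $(F_n,F)$. If anything, your write-up is slightly more careful than the paper's, since you make explicit the index set $I=\mathbb{N}\cup\{*\}$ and the passage from the componentwise bounds on $\partial_{\alpha}f$ to the bounds on the derivative vectors $f^{(k)}$, both of which the paper leaves implicit.
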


\begin{proof}
Fix $p \geq 1$, $q > 1$, and $\epsilon \in (0,1)$. By the assumptions of the theorem and Proposition \ref{malliavin-estimate}, the random vectors $F_n$ and $F$ admit $C^\infty$ densities $f_n$, $f$, respectively, and for each $k, l \in \mathbb{N}$,
\[
\sup_{n \in \mathbb{N}} \sup_{x \in \mathbb{R}^d} |\partial^{\alpha} f_n(x)| (1 + |x|)^l < \infty, \quad
\sup_{x \in \mathbb{R}^d} |\partial^{\alpha} f(x)| (1 + |x|)^l < \infty
\]
for all multi-indices $\alpha$ with $|\alpha| \leq k$. In particular, condition 
\eqref{intim} in Lemma \ref{fourier1} is satisfied with uniform constants $d_{k,l}$. Applying Lemma \ref{fourier1} to the pair $(F_n, F)$ yields
\[
\rho_p(\mathcal{L}(F_n),\mathcal{L}(F)) \leq C W_q(\mathcal{L}(F_n),\mathcal{L}(F))^{(1 - \epsilon)},
\]
where the constant $C$ depends only on $p$, $q$, $\epsilon$, and the sequence $(d_{k,l})$, hence it is uniform in $n$. This proves the claim.
\end{proof}




\end{document}